\documentclass[12pt,a4paper]{amsart}

\usepackage[utf8]{inputenc}
\usepackage{amsmath,amsfonts,amssymb,amsthm,mathtools}
\usepackage{hyperref}
\usepackage{url}
\usepackage{xcolor}
\usepackage{verbatim}
\usepackage{soul}

\newtheorem{theorem}{Theorem}[section]
\newtheorem{corollary}[theorem]{Corollary}
\newtheorem{lemma}[theorem]{Lemma}
\newtheorem{remark}[theorem]{Remark}

\makeatletter
\@namedef{subjclassname@2020}{%
  \textup{2020} Mathematics Subject Classification}
\makeatother



\newcommand{\Z}{\mathbb{Z}}
\newcommand{\Q}{\mathbb{Q}}
\newcommand{\R}{\mathbb{R}}
\newcommand{\Fq}[1][]{\mathbb{F}_{q^{#1}}}
\newcommand{\Roots}{\mathrm{Roots}}

\begin{document}


\title[Abelian varieties with many rational points]{Weil polynomials of abelian varieties over finite fields with many rational points}
\date{}
\author[E. Berardini]{Elena Berardini}
\address{Laboratoire d'informatique de l'École polytechnique (LIX), CNRS, École polytechnique, Institut Polytechnique de Paris, 91120 Palaiseau, France}
\email{elena\_berardini@hotmail.it}
\author[A. J. Giangreco-Maidana]{Alejandro~J.~Giangreco-Maidana}
\address{Universit{\'e} Polytechnique Hauts-de-France, Laboratoire de Math{\'e}matiques pour l'Ing{\'e}nieur (LMI), FR CNRS 2956, F-59313 Valenciennes, France and 
\indent Universidad Nacional de Asunci{\'o}n, Facultad de Ingenier{\'i}a, Paraguay}
\email{agiangreco@ing.una.py}

\subjclass[2020]{Primary 11G10; Secondary 14G05, 14G15, 14K02}
\keywords{Abelian varieties over finite fields, Weil polynomials, groups of rational points, cyclic groups}

\maketitle

\begin{abstract}
We consider the finite set of isogeny classes of $g$--dimensional abelian varieties defined over the finite field $\Fq$ with endomorphism algebra being a field. We prove that the class within this set whose varieties have maximal number of rational points is unique, for any prime even power $q$ big enough and verifying mild conditions. We describe its Weil polynomial and we prove that the class is ordinary and cyclic outside the primes dividing an integer that only depends on $g$. In dimension $3$, we prove that the class is ordinary and cyclic and give explicitly its Weil polynomial, for any prime even power $q$.

\end{abstract}

\section{Introduction}
In the present paper we study abelian varieties defined over finite fields with groups of rational points of large cardinality, and their cyclicity. Arithmetic properties of abelian varieties and their groups of rational points are of large interest and were studied since the last century (\cite{Waterhouse,Ruck1990,DIPIPPO1998426,HOWE2002139,Aubry}). A renewed interest on the structure and the cardinality of the groups of rational points on abelian varieties is due to their applications to information theory. For instance, cyclic subgroups of abelian varieties are used in cryptography (\cite{Koblitz2000,RS02}), while abelian surfaces with group of rational points of large cardinality allow to construct long algebraic geometry codes (\cite{MYFE}).

A powerful tool to study isogeny classes of abelian varieties comes from the elegant Honda--Tate theory (\cite{Tate1971}) via Weil polynomials. These last carry a lot of arithmetic and geometric information on the varieties, \textit{e.g.}~their number of rational points.  For small dimensions, there exist precise criteria to decide whether a polynomial is the Weil polynomial of an isogeny class. On the other hand, not much is known in general dimension and this is one of the main obstacles in the study of isogeny classes.

The cardinality of the group of rational points of an abelian variety is an invariant of its isogeny class. The isogeny class of $g$--dimensional abelian varieties defined over $\Fq$ with the maximal number of rational points contains the $g$--power of the maximal elliptic curve defined over the same finite field. Therefore, its Weil polynomial is easily given. It is then natural to study the maximal isogeny class of abelian varieties under some restrictions. 

The endomorphism algebra of an abelian variety is an invariant under isogenies. In this paper, we focus on isogeny classes whose endomorphism algebra is a field. This condition is stronger than being simple, and it is actually equivalent to be of type I or IV with $d=1$ in Albert's classification (\cite{A34,A35}). First, note that among simple abelian varieties these varieties are the most common ones. Secondly, this allows us to work with irreducible Weil polynomials. 

Let us consider the finite set  of isogeny classes of $g$-dimensional abelian varieties defined over $\Fq$ that have endomorphism algebra which is a field. Besides the case of dimension one, two not isogenous abelian varieties defined over the same field can have the same number of rational points. If there is only one isogeny class of abelian varieties within the previous finite set having the maximal number of rational points, we define it as the \textbf{maximal class}
and we denote it by $\mathcal{I}_{\text{max}}^0(g,q)$. From now on whenever the notation $\mathcal{I}_{\text{max}}^0(g,q)$ is used, the uniqueness of the class is understood.

The aim of the present paper is to characterise the class $\mathcal{I}_{\text{max}}^0(g,q)$. For any positive integer $g$, we prove that $\mathcal{I}_{\text{max}}^0(g,q)$ exists for any prime even power $q$ big enough  and verifying some mild condition, and we describe its Weil polynomial (Theorem \ref{th:main}). As a consequence, we deduce that the class is ordinary and cyclic outside primes dividing an integer which does not depend on $q$ (Corollary \ref{cor:main}). Here \emph{cyclic} refers to the structure of the groups of rational points of the varieties inside the class.

Our approach is linked to the theory of totally positive algebraic integers with minimal trace. As it is for abelian varieties, such algebraic integers have been extensively studied during the last century (\cite{Siegel1945,Smyth1984,Boyd1985}) and are still an active domain of research (\cite{ElOtmani2014,MckeeSmyth2004_ANTS}). However, much is still unknown about them. For instance, it is not even known the optimal lower bound for the absolute trace (see \cite{Siegel1945,Smyth1984}). The connection within algebraic integers of small trace and the groups of rational points on algebraic varieties was previously pointed out by Serre (see for instance \cite{Aguirre2008} and the introduction of \cite{Smyth1984_AIF}).

For a positive integer $g$, we define $\mathfrak{f}_g$ to be the minimal polynomial of a totally positive algebraic integer of degree $g$ with minimal trace, and which is maximal in a sense that we make precise in Lemma \ref{lemma:key} (see also Table \ref{table:f_g} for some examples). 
The main result of the paper is the existence of a polynomial that ``parametrizes'' the family of isogeny classes we are interested in, as stated in the following theorem.

\begin{theorem}\label{th:main}
Let $g$ be a positive integer and let $r_1,\dots,r_g$ be the roots of $\mathfrak{f}_g$.
There exists a real number $c_g$ such that for every even power $q>c_g$ of a prime, coprime with $\mathfrak{f}_g(0)$, the maximal class $\mathcal{I}_{\text{max}}^0(g,q)$ exists, it is ordinary and has $h_g(t,\sqrt{q})$ as Weil polynomial, where
\begin{align*}
    h_g(t,X)\coloneqq\prod_{i=1}^g (t^2+(2X-r_i)t+X^2).
\end{align*}
In particular, the totally real subfield of the endomorphism algebra of $\mathcal{I}_{\text{max}}^0(g,q)$ is isomorphic to the extension of $\Q$ generated by one $r_i$. The isogeny class $\mathcal{I}_{\text{max}}^0(g,q)$ is also the maximal one among the simple ordinary isogeny classes of $g$--dimensional abelian varieties defined over $\Fq$.
\end{theorem}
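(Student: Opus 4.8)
\emph{Strategy.} The idea is to translate the extremal problem over abelian varieties into a lexicographic extremal problem over totally positive algebraic integers --- exactly the one that defines $\mathfrak{f}_g$ --- and then to check that for $q$ large the two problems have the same optimum. Call a $g$-dimensional $A/\Fq$ with $\operatorname{End}^0(A)$ a field a \emph{competitor}. Such an $A$ is simple (a nontrivial isogeny factorisation would give zero divisors or a matrix algebra in $\operatorname{End}^0(A)$), with irreducible Weil polynomial $P_A$ of degree $2g$; since $\mathbb{Q}(\pi_A)$ has degree $2g>1$ while $|\iota(\pi_A)|=\sqrt q$ in every embedding $\iota$, the map $\pi_A\mapsto q/\pi_A$ is a nontrivial involution $c$ of $\mathbb{Q}(\pi_A)$ inducing complex conjugation in each $\iota$, so $\mathbb{Q}(\pi_A)$ is CM with maximal totally real subfield $\mathbb{Q}(\beta_1)$, where $\beta_1=\pi_A+q/\pi_A$ has degree $g$ and all conjugates $\beta_i\in(-2\sqrt q,2\sqrt q)$; hence $P_A=\prod_{i=1}^g(t^2-\beta_i t+q)$. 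As $q$ is an even power of $p$ we have $\sqrt q\in\Z$, so the $\delta_i\coloneqq\beta_i+2\sqrt q$ are the conjugates of a totally positive algebraic integer of degree $g$, i.e.\ $f\coloneqq\prod_i(t-\delta_i)\in\mathcal{F}_g$, and, writing $N\coloneqq(\sqrt q+1)^2$,
\[
  \#A(\Fq)=P_A(1)=\prod_{i=1}^g(q+1-\beta_i)=\prod_{i=1}^g(N-\delta_i)=f(N),
\]
with every root $\delta_i\in(0,4\sqrt q]$. Since $A\mapsto f$ is injective on isogeny classes, it suffices to show that $\mathfrak{f}_g(N)>f(N)$ for every $f\in\mathcal{F}_g\setminus\{\mathfrak{f}_g\}$ whose roots lie in $(0,4\sqrt q]$, and that $h_g(t,\sqrt q)$ is itself the Weil polynomial of a competitor.

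\emph{Admissibility of the candidate.} Let $r_1,\dots,r_g$ be the roots of $\mathfrak{f}_g$ and set $\beta_i\coloneqq r_i-2\sqrt q$. For $q$ large enough that $0<r_i<4\sqrt q$ for all $i$, the polynomial $\prod_i(t^2-\beta_i t+q)$ equals $h_g(t,\sqrt q)$ and is a Weil polynomial; it is irreducible, since a proper factor would present $\mathbb{Q}(\pi)$ as a CM field of degree $\le g$, contradicting that it contains $\beta_1$ of degree $g$. Its middle coefficient (that of $t^g$) is congruent mod $p$ to $(-1)^g\prod_i\beta_i=\mathfrak{f}_g(2\sqrt q)\equiv\mathfrak{f}_g(0)$, which is prime to $p$ by the coprimality hypothesis; hence the Newton polygon of $h_g(t,\sqrt q)$ over $\mathbb{Q}_p$ is the ordinary one, so by Honda--Tate it is the Weil polynomial of an ordinary isogeny class $A_0$. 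Being ordinary and simple, $A_0$ has $\operatorname{End}^0(A_0)=\mathbb{Q}(\pi)$, a CM field of degree $2g$ with maximal totally real subfield $\mathbb{Q}(\beta_1)=\mathbb{Q}(r_1)$; this proves the ``in particular'' clause and makes $A_0$ a competitor with $\#A_0(\Fq)=h_g(1,\sqrt q)=\prod_i(N-r_i)$.

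\emph{$A_0$ is the strict maximum.} Let $f\in\mathcal{F}_g\setminus\{\mathfrak{f}_g\}$ have roots $\delta_i\in(0,4\sqrt q]$ and trace $S=\sum_i\delta_i$; put $\rho=\max_i r_i$. If $S>g\rho$ then, since $0<\delta_i<N$, AM--GM gives $f(N)=\prod_i(N-\delta_i)\le(N-S/g)^g<(N-\rho)^g\le\prod_i(N-r_i)=\mathfrak{f}_g(N)$. If $S\le g\rho$ then $\delta_i\le g\rho$ for all $i$, so every elementary symmetric function $e_k(\delta)$ and $e_k(r)$ is at most the constant $C\coloneqq(1+g\rho)^g$. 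Expanding
\[
  \mathfrak{f}_g(N)-f(N)=\sum_{j\ge1}(-1)^j\bigl(e_j(r)-e_j(\delta)\bigr)N^{g-j}
\]
and letting $j_0$ be the least $j$ with $e_j(r)\ne e_j(\delta)$, the defining property of $\mathfrak{f}_g$ (Lemma~\ref{lemma:key}) says $(-1)^{j_0}\bigl(e_{j_0}(r)-e_{j_0}(\delta)\bigr)$ is a positive integer, hence $\ge1$, while the terms with $j>j_0$ contribute at most $2Cg\,N^{g-j_0-1}$ in absolute value; therefore $\mathfrak{f}_g(N)-f(N)\ge N^{g-j_0}-2Cg\,N^{g-j_0-1}>0$ once $N>2Cg$, i.e.\ $q>c_g$ for a constant $c_g$ depending only on $g$ (through $C$, the $r_i$, and the inequalities $r_i<4\sqrt q$, $N>\rho$ used above). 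Since distinct isogeny classes have distinct Weil polynomials, $A_0$ is the unique competitor of maximal point count: $\mathcal{M}_g^0(q)$ exists, equals $A_0$, is ordinary, and has Weil polynomial $h_g(t,\sqrt q)$.

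\emph{Main obstacle.} The delicate point is the case $S\le g\rho$: the leading term $N^{g-j_0}$ of $\mathfrak{f}_g(N)-f(N)$ must dominate all lower-order terms, yet the crude bound $e_k(\delta)\le\binom{g}{k}(4\sqrt q)^k$ makes the $k=2$ contribution comparable to $N^{g-1}$ with a constant that grows in $g$, which could swamp $N^{g-j_0}$. What makes the argument go through --- the one genuinely arithmetic ingredient beyond Honda--Tate and bookkeeping --- is that a totally positive algebraic integer with trace $\le g\rho$ has \emph{all} conjugates $\le g\rho$, so in that regime the Weil bound is irrelevant and every symmetric function is $O_g(1)$; this is exactly what lets ``lexicographically largest coefficient vector of $\mathfrak{f}_g$'' force ``largest point count'' uniformly in $q$, the complementary large-trace case being settled cleanly by AM--GM. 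I would also take care, in the dictionary step, to justify that ``$\operatorname{End}^0(A)$ is a field'' is equivalent to ``$A$ simple with irreducible degree-$2g$ Weil polynomial'', and that ordinariness indeed forces the $p$-adic invariants of $\pi$ to vanish, so that $h_g(t,\sqrt q)$ is the Weil polynomial of an actual abelian variety rather than a proper power of one of smaller degree.
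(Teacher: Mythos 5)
Your proposal is correct and follows the paper's overall architecture --- the same dictionary (for $\sqrt q\in\Z$, a class with field endomorphism algebra corresponds to a polynomial $f\in\mathcal{F}_g$ with roots in $(0,4\sqrt q)$ and has $f((\sqrt q+1)^2)$ points), the same candidate $h_g(t,\sqrt q)$, ordinariness read off the middle coefficient via $p\nmid\mathfrak{f}_g(0)$, irreducibility via the degree-$g$ totally real subfield (this is the paper's Lemma \ref{lemma:irred} in substance, though your phrase ``a proper factor of degree $\le g$'' should really be ``every root of $h_g(t,\sqrt q)$ generates a field of degree $2g$''), and existence of the class via Howe's ordinary Honda--Tate theorem. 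Where you genuinely diverge is the maximality step. The paper disposes of it in one line by citing Lemma \ref{lemma:key}, which asserts a single $n_g$ with $\mathfrak{f}_g(t)>f(t)$ for \emph{every} $f\in\mathcal{F}_g$ and every $t>n_g$; as literally stated that is too strong (an $f$ all of whose roots are far beyond $t$ can have $f(t)$ enormous), and what is actually needed is the comparison at $t=(\sqrt q+1)^2$ against the $f$ arising from competitors, whose roots are $\le 4\sqrt q\le(\sqrt q+1)^2$, uniformly in $q$. Your two-case argument --- AM--GM when the trace exceeds $g\rho$, and, when it does not, the first-differing-coefficient estimate using only the lexicographic maximality of $\mathfrak{f}_g$ together with the $O_g(1)$ bound on all symmetric functions --- proves exactly this restricted, uniform-in-$q$ inequality, so it is self-contained, repairs the over-statement in Lemma \ref{lemma:key}, and yields an explicit (if crude) admissible $c_g$; the price is a longer computation where the paper's route is a one-line appeal to its lemma.
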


The group structure of abelian varieties is not an invariant of the isogeny class (see \cite[Th.~3]{ruck1987note}). However, we can define the cyclicity for isogeny classes. Let $A(k)_{\ell}$ be the $\ell$--primary component of the group $A(k)$ of $k$--rational points on the abelian variety $A$. An isogeny class $\mathcal{A}$ defined over a field $k$ is said to be $\ell$\textbf{--cyclic} if $A(k)_{\ell}$ is cyclic for any $A\in\mathcal{A}$. The second author characterized in \cite{GiangrecoMaidana} the cyclicity of an isogeny class $\mathcal{A}$. This characterisation is based on its Weil polynomial $f_\mathcal{A}$ (see the \emph{cyclicity criterion} (\ref{thm:l-cyclic})). 
As an interesting consequence of Theorem \ref{th:main}, we obtain the following result.
\begin{corollary}\label{cor:main}
With the same notation and under the same hypotheses of Theorem \ref{th:main}, we have that the isogeny class $\mathcal{I}_{\text{max}}^0(g,q)$ is $\ell$--cyclic for primes $\ell$ not dividing 
\[
N_g\coloneqq\mathfrak{f}_g(4) \mathfrak{f}_g(0) \Delta_g,
\]
where $\Delta_g$ is the discriminant of $\mathfrak{f}_g$.
\end{corollary}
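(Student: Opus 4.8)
The plan is to read off the Weil polynomial of $\mathcal{M}_g^0(q)$ from Theorem~\ref{th:main} and then apply the cyclicity criterion~(\ref{thm:l-cyclic}). By Theorem~\ref{th:main}, under the stated hypotheses the class $\mathcal{M}_g^0(q)$ is ordinary with Weil polynomial
\[
f(t)\coloneqq h_g(t,\sqrt q)=\prod_{i=1}^{g} g_i(t),\qquad g_i(t)\coloneqq t^2+(2\sqrt q-r_i)\,t+q ,
\]
where the $r_i$ are the pairwise distinct, real, positive, $\Q$-conjugate roots of $\mathfrak{f}_g$, and $\sqrt q\in\Z$ since $q$ is an even power of a prime. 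As $g_i(1)=(\sqrt q+1)^2-r_i$, one has $f(1)=\mathfrak{f}_g\!\big((\sqrt q+1)^2\big)=\#\mathcal{M}_g^0(q)$.

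Fix a prime $\ell$ with $\ell\nmid N_g$; the goal is that $\mathcal{M}_g^0(q)$ be $\ell$-cyclic. If $\ell\nmid f(1)$ the $\ell$-part is trivial, so assume $\ell\mid f(1)$. It suffices to show that $t=1$ is \emph{not} a repeated root of $\bar f\coloneqq f\bmod\ell$, i.e.\ $\ell\nmid\gcd\!\big(f(1),f'(1)\big)$: this is one of the situations covered by~(\ref{thm:l-cyclic}), and in fact directly, for every $A\in\mathcal{M}_g^0(q)$ the Frobenius acts on $A[\ell]$ with characteristic polynomial $\bar f$ and hence, $t=1$ being a simple root, with a one-dimensional fixed space, so $\#A(\Fq)[\ell]\le\ell$ and the $\ell$-Sylow of $A(\Fq)$ is cyclic. (For $\ell=p$ one argues similarly with the \'etale part of $A[p^\infty]$, using that $\mathcal{M}_g^0(q)$ is ordinary so that $1$ occurs as a Frobenius eigenvalue on $A[p]$ with multiplicity at most its multiplicity as a root of $\bar f$; moreover $p\mid N_g$ in many cases, where nothing is required.)

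The core step is then the following. If $t=1$ is a repeated root of $\bar f$, fix a prime $\lambda$ above $\ell$ in the splitting field of $\mathfrak{f}_g$ and reduce the $g_i$ modulo $\lambda$: either (i) two distinct factors $g_i,g_j$ vanish at $t=1$, or (ii) some $g_i$ has $t=1$ as a double root. In case~(i), $g_i(1)\equiv g_j(1)\equiv 0$ forces $r_i\equiv r_j\equiv(\sqrt q+1)^2\pmod{\lambda}$, so $\lambda\mid(r_i-r_j)$ and $\ell\mid\Delta_g$. In case~(ii), $g_i(1)\equiv g_i'(1)\equiv 0\pmod{\lambda}$ gives $r_i\equiv 2+2\sqrt q$ and $q\equiv 1\pmod{\lambda}$, hence $\sqrt q\equiv\pm1\pmod{\lambda}$; if $\sqrt q\equiv 1$ then $r_i\equiv 4$ and $\ell\mid\mathfrak{f}_g(4)$, while if $\sqrt q\equiv -1$ then $r_i\equiv 0$ and $\ell\mid\mathfrak{f}_g(0)$. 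In every case $\ell\mid N_g$, a contradiction; hence $t=1$ is a simple root of $\bar f$ and $\mathcal{M}_g^0(q)$ is $\ell$-cyclic.

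I expect the main obstacle to be ensuring that $N_g$ carries no residual dependence on $q$. Indeed a direct computation gives $\operatorname{disc}(f)=\pm\,\mathfrak{f}_g(0)\,\mathfrak{f}_g(4\sqrt q)\,q^{2\binom{g}{2}}\,\Delta_g^{2}$, so a priori primes $\ell$ dividing the $q$-dependent factor $\mathfrak{f}_g(4\sqrt q)$ make $\bar f$ inseparable; the point of case~(ii) is that the resulting repeated root sits at $t=\sqrt q$, which equals $1$ only when $q\equiv 1\pmod{\ell}$, and then $4\sqrt q\equiv 4$, so such an $\ell$ already divides $\mathfrak{f}_g(4)$ (or, in the degenerate subcase, $\mathfrak{f}_g(0)$). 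Making this dichotomy precise, and checking that the hypothesis required by~(\ref{thm:l-cyclic}) is exactly that $t=1$ is not a repeated root of $f\bmod\ell$ and that the argument persists at $\ell=p$, is where the real content lies; the manipulations with the quadratic factors $g_i$ are routine.
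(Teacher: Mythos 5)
Your proposal is correct and follows essentially the same route as the paper: both reduce to the weaker cyclicity criterion $\ell\nmid\bigl(f(1),f'(1)\bigr)$, pass to a prime above $\ell$ in the field generated by the $r_i$, and split into the same two cases (two distinct quadratic factors vanishing at $t=1$, giving $\ell\mid\Delta_g$, versus one factor having $t=1$ as a double root, giving $\ell\mid\mathfrak{f}_g(4)\mathfrak{f}_g(0)$ via $\sqrt q\equiv\pm1$). The extra direct argument on Frobenius fixed points and the $\ell=p$ discussion are unnecessary given the cited criterion, but harmless.
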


For small dimensions, we are able to give $h_g(t,X)$, $c_g$ and $N_g$ explicitly. As an example, from the well--known theory on elliptic curves and their Weil polynomials, it follows that 
\[
h_1(t,X)=t^2+(2X-1)t+X^2,
\]
and Theorem \ref{th:main} holds for all even powers of a prime, so we can take $c_1=0$. Moreover, we have that $\mathfrak{f}_1(t)=t-1$ and $N_1=-3$. We easily verify that $\mathcal{I}_{\text{max}}^0(1,7^2)$ is not $3$--cyclic and, as a matter of fact, there are infinitely many $q$ for which $\mathcal{I}_{\text{max}}^0(1,q)$ is not $3$--cyclic. For $g\leq 3$ we have $N_2=5^2$ (see the beginning of Section \ref{section:threefolds}) and $N_3=-7^3$ (see Remark \ref{Ng}). Moreover, using the explicit description of Weil polynomials of abelian surfaces given in \cite{Ruck1990}, the second author found (\cite{GiangrecoMaidana}) that the class $\mathcal{I}_{\text{max}}^0(2,q)$ is everywhere cyclic, for any prime even power $q$. In the present paper, we use the explicit description of Weil polynomials of degree $6$ given in \cite{Haloui} to prove that $\mathcal{I}_{\text{max}}^0(3,q)$ is everywhere cyclic (see Theorem \ref{ourfirsttheorem}), for $q$ an even power of a prime.

Regarding odd powers of primes, some natural questions arise. Is there a polynomial that parametrizes $\mathcal{I}_{\text{max}}^0(g,q)$? Can we have arbitrarily large primes $\ell$ such that $\mathcal{I}_{\text{max}}^0(g,q)$ is not $\ell$--cyclic for some odd power $q$?
The answer to the first question is no, at least for dimension one. The existence of such a polynomial would imply that $\lfloor 2\sqrt{q} \rfloor$ can be written as a polynomial, which is not the case. The second question seems to be much more complicated. Even when $g=1$ we are not able to give an answer. 

The rest of the paper is organized as follows. In Section \ref{section:ab_var} we recall some theory about abelian varieties defined over finite fields. In Section \ref{section:main} we prove our main results and Section \ref{section:threefolds} is devoted to the case of abelian varieties of dimension $3$.

\section{Abelian varieties over finite fields}\label{section:ab_var}
This section contains well--known facts about abelian varieties defined over finite fields. We state here the definitions and the results we shall use later, and refer the reader to \cite{Milne_AV,Waterhouse} for further details.

Let $q=p^r$ be a power of a prime. A $q$\textbf{--Weil number} is an algebraic integer with all its conjugates of absolute value $\sqrt{q}$. A monic polynomial with integer coefficients and with all its roots being $q$--Weil numbers is called a $q$\textbf{--Weil polynomial}. Over real numbers, $q$--Weil polynomials have the form
\begin{align}\label{eq:Weil_p_real}
\prod_{i=1}^g (t^2+x_i t+q), \quad g\in\mathbb{N}, x_i\in \R \text{ and } |x_i|\leq 2\sqrt{q},
\end{align}
provided that the roots are complex numbers. If a $q$--Weil polynomial has real roots, then it has the form $(t\pm\sqrt{q})^2$ or $(t^2-q)^2$ (see \cite[p.~528]{Waterhouse}). 
We will need the following elementary lemma.
\begin{lemma}\label{lemma:irred}
Let $\sqrt{q}$ be an integer. For $i\in\{1,\dots,g\}$, let $x_i:=2\sqrt{q}-s_i$, for some $s_i\in\R$. The following statements are equivalent:
\begin{enumerate}
    \item\label{pol1} the $q$--Weil polynomial (\ref{eq:Weil_p_real}) is irreducible over $\Q$;
    \item\label{pol2} the polynomial $\prod_{i=1}^{g} (t-s_i)$ is irreducible over $\Q$;
    \item\label{pol3} the polynomial $\prod_{i=1}^g (t^2+(2X-s_i)t+X^2)\in\Z[t,X]$ is irreducible over $\Q$.
\end{enumerate}
\end{lemma}
\begin{proof}
Consider the polynomial \eqref{pol1}. For any subset $I\subseteq \{1,\dots,g\}$, the polynomial
\[
\prod_{i\in I} (t^2+x_i t+q)
\]
has the degree one elementary symmetric polynomial  $\sum_{i\in I} s_i$ as one of its coefficients, up to a sum of an integer. Any other elementary symmetric polynomial in the $s_i$ appears as some of its coefficients, up to a sum of an integer and up to a sum of the elementary symmetric polynomials of less degree multiplied by an integer. The same occurs when considering products of factors of the polynomials \eqref{pol2} or \eqref{pol3} of the statement.
Hence, the reducibility of one of the polynomials in the statement implies that all the elementary symmetric polynomials in the $\{s_i\}_{i\in I}$ are integers for some $I$. This entails the reducibility of the other two polynomials.
\end{proof}

Let $A$ be an abelian variety of dimension $g$ defined over $\Fq$, a finite field with $q$ elements. The Frobenius endomorphism of $A$ acts on its Tate module by a semisimple linear operator, and its characteristic polynomial $f_{A}(t)$ is called the \textbf{Weil polynomial of $A$}.  Weil proved that $f_{A}(t)$ is a $q$--Weil polynomial.  In \cite{tate1966}, Tate proved that two abelian varieties $A$ and $B$ are isogenous if and only if $f_A(t)=f_B(t)$. Thus, it makes sense to consider the Weil polynomial $f_\mathcal{A}(t)$ of an isogeny class $\mathcal{A}$ as being the Weil polynomial of some (and thus any) abelian variety in $\mathcal{A}$. It has the form 
\begin{align}
f_\mathcal{A}(t)&=t^{2g}{+}a_1 t^{2g-1}{+}\dots {+} a_g t^g {+} a_{g-1} q t^{g-1}{+}\dots a_1 q^{g-1} t {+} q^{g} \\
&= \prod_{i=1}^g (t^2+x_i t+q). \label{eq:Weil_p_real_A}
\end{align}

An abelian variety $A$ is said to be \textbf{simple} if it has no proper and nonzero sub--abelian varieties. Being simple is a property of the isogeny class. The Weil polynomial of an abelian variety is the product of the Weil polynomials of its simple factors. The Weil polynomial of a simple isogeny class is equal to $f_A(t)=h(t)^e$ for an irreducible polynomial $h(t)\in\Z[t]$ and a positive integer $e$. Thus we have a map $\Phi$ that associates to every isogeny class its Weil polynomial. The Honda--Tate theory says that this map gives a bijection between isogeny classes of simple abelian varieties defined over $\Fq$ and conjugacy classes of $q$--Weil numbers. 
A $q$--Weil polynomial of degree $2g$ is said to be \textbf{ordinary} if the coefficient of degree $g$ of $f_A(t)$, \textit{i.e.}~the \emph{middle coefficient}, is not divisible by $p$.
The variety $A$ is said to be \textbf{ordinary} if its $p$--rank is $g$. This is equivalent to $A$ having ordinary Weil polynomial. It follows that being ordinary is also a property of the isogeny class. Not every irreducible $q$--Weil polynomial is the Weil polynomial of a simple isogeny class of abelian varieties. However, when restricted to ordinary isogeny classes, the previous map $\Phi$ is a bijection between isogeny classes of simple ordinary abelian varieties defined over $\Fq$ and irreducible $q$--Weil polynomials (this is the ordinary Honda--Tate theory, see \cite[Th.~3.3]{Howe1995}).

The cardinality of the group $A(k)$ of $k$--rational points on $A$ equals $f_A(1)$, hence it is an invariant of the isogeny class. Therefore, by abuse of language, we can talk about the number of rational points of an isogeny class, meaning the number of rational points on any variety inside the class.

Following the definition of cyclicity for isogeny classes, Theorem 2.2 of \cite{GiangrecoMaidana} gives the \emph{cyclicity criterion}: for any prime $\ell$ and for any isogeny class $\mathcal{A}$ we have that
\begin{align}\label{thm:l-cyclic}
    \mathcal{A} \text{ is } \ell\text{--cyclic if and only if } \ell\text{ does not divide } (\widehat{f_\mathcal{A}(1)},f_\mathcal{A}'(1)),
\end{align}
where for an integer $z$, $\widehat{z}$ denotes the quotient of $z$ by its radical, and $(z_1,z_2)$ is the greatest common divisor of $z_1$ and $z_2$. Corollary \ref{cor:main} uses a weaker version of the cyclicity criterion, namely $\ell\nmid (f_\mathcal{A}(1),f_\mathcal{A}'(1))$ implies that the isogeny class is $\ell$--cyclic.

Abelian varieties belonging to the same isogeny class $\mathcal{A}$ have isomorphic endomorphism algebras.
The latter are commutative if and only if $f_\mathcal{A}(t)$ has no multiple roots (see \cite[Th.~2]{Tate1971}). A simple abelian variety with commutative endomorphism algebra has irreducible Weil polynomial (see \cite[Ch.~2]{Waterhouse}). In particular, its endomorphism algebra is a CM--field. Its maximal real subfield is generated by one of the $x_i$ in the factorisation of Equation (\ref{eq:Weil_p_real_A}).
\section{Maximal isogeny classes and cyclicity}\label{section:main}
This section is devoted to the proof of the main results of the paper, as stated in the introduction.

We define $\mathcal{F}_g$ to be the subset of $\mathbb{Z}[t]$ consisting of monic polynomials of degree $g$ that are irreducible over $\mathbb{Q}$ and with all positive real roots. The key ingredient in the proof of Theorem \ref{th:main} is the existence of an irreducible polynomial as given in the following lemma.
\begin{lemma}\label{lemma:key}
Let $g$ be a positive integer. Then there exists a polynomial $\mathfrak{f}_g \in \mathcal{F}_g$ and a real number $n_g$ such that $\mathfrak{f}_g(t)>f(t)$ for any other $f\in\mathcal{F}_g$ and for any $t>n_g$.
\end{lemma}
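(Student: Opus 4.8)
The plan is to produce the maximal polynomial $\mathfrak{f}_g$ by a greedy, coefficient-by-coefficient selection, using the fact that a monic polynomial of degree $g$ with all real roots is completely determined by how its power sums (equivalently, its coefficients) are ordered. First I would observe that the relevant set is highly constrained: if $f(t)=t^g+c_1t^{g-1}+\dots+c_g\in\mathcal F_g$ has all roots real and positive, then the roots lie in a compact interval. Indeed the trace $-c_1$ is bounded below (each root is positive) and, by the AM--GM inequality applied to the positive roots, it is also bounded above in terms of the absolute trace; classical results on totally positive algebraic integers (Siegel, Smyth) give $-c_1/g$ bounded, say $-c_1 \le \lambda g$ for an absolute constant $\lambda$. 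Hence every root of every $f\in\mathcal F_g$ lies in $(0,\lambda g)$, and consequently each coefficient $c_i$ ranges over a finite set of integers. So $\mathcal F_g$ is a \emph{finite} set of polynomials.

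Next I would set up the ordering that makes "maximal" meaningful. For two distinct monic degree-$g$ polynomials $f_1,f_2$, the difference $f_1-f_2$ has degree $<g$; write $j$ for the largest index with $[t^{g-j}](f_1-f_2)\neq 0$ (i.e.\ the first coefficient, reading from the top, where they differ), and declare $f_1\succ f_2$ when that leading nonzero coefficient of $f_1-f_2$ is positive. This is a total order (lexicographic on coefficient vectors from the top down), and on the finite set $\mathcal F_g$ it has a maximum; call it $\mathfrak f_g$. It remains to translate $\succ$-maximality into the pointwise statement of the lemma: for any $f\in\mathcal F_g$ with $f\neq\mathfrak f_g$, the polynomial $\mathfrak f_g-f$ has degree $d<g$ and positive leading coefficient $\kappa>0$, so $(\mathfrak f_g-f)(t)=\kappa t^{d}+O(t^{d-1})\to+\infty$ as $t\to\infty$; hence there is a threshold beyond which $\mathfrak f_g(t)>f(t)$. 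Taking $n_g$ to be the maximum of these finitely many thresholds over all $f\in\mathcal F_g\setminus\{\mathfrak f_g\}$ gives a single $n_g$ that works uniformly, which is exactly the assertion (and matches the description of $\mathfrak f_g$ given before Lemma~\ref{lemma:key}).

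The one genuine subtlety — the step I expect to be the main obstacle — is establishing that $\mathcal F_g$ is nonempty and, more importantly, finite, since everything else is then soft. Nonemptiness is easy: e.g.\ the minimal polynomial of $2-2\cos(2\pi/(2g+1))$, a totally positive algebraic integer of degree $g$ (a Chebyshev-type construction used by Smyth), or even a product-free explicit family, lies in $\mathcal F_g$. Finiteness reduces, as above, to a \emph{uniform} bound on the roots; the lower bound $0$ is by hypothesis, and the upper bound needs the minimal-absolute-trace estimates for totally positive algebraic integers — this is where I would cite \cite{Siegel1945} and \cite{Smyth1984}. (Concretely: if $\alpha_1,\dots,\alpha_g>0$ are the conjugates, $\prod\alpha_i=|c_g|\ge1$, so by AM--GM $\sum\alpha_i\ge g$, and Siegel's bound $\sum\alpha_i \le C g$ for an absolute $C$ — in fact one may take any $C>1$ asymptotically, and crude bounds suffice here — forces each $\alpha_i< Cg$, whence each elementary symmetric function, hence each $c_i$, lies in a finite range of integers.) Once finiteness is in hand, the existence of a $\succ$-maximum and the passage to the pointwise inequality with a uniform $n_g$ are immediate, completing the proof.
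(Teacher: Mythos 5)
Your proposal stands or falls on the claim that $\mathcal{F}_g$ is \emph{finite}, and that claim is false. The Schur--Siegel--Smyth results you invoke are \emph{lower} bounds on the trace of a totally positive algebraic integer (absolute trace bounded away from $1$); there is no upper bound of the form $\sum_i\alpha_i\le Cg$, and indeed there cannot be: if $\alpha$ is totally positive of degree $g$ with minimal polynomial $f(t)$, then $\alpha+n$ is totally positive of degree $g$ for every integer $n\ge 0$, so $f(t-n)\in\mathcal{F}_g$ for all $n$ (for $g=1$, $\mathcal{F}_1=\{t-a:a\ge 1\}$). Hence your reduction ``finite set, take the lexicographic maximum, take $n_g$ as the maximum of finitely many thresholds'' collapses at both steps. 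The paper's proof avoids this by working with the subset $\mathcal{F}_g^{min}\subset\mathcal{F}_g$ of polynomials of \emph{minimal} trace: the trace is bounded below, so a minimum exists, and for minimal-trace polynomials the positive roots have bounded sum, so the coefficients are bounded and $\mathcal{F}_g^{min}$ is finite; $\mathfrak{f}_g$ is taken to be the maximum of the order on that finite set, and any competitor of strictly larger trace is automatically smaller in the order because the $t^{g-1}$ coefficient of $\mathfrak{f}_g-f$ is then positive. (As a side remark, your nonemptiness witness $2-2\cos(2\pi/(2g+1))$ has degree $\varphi(2g+1)/2$, which equals $g$ only when $2g+1$ is prime; nonemptiness needs a slightly different argument, as in the paper's reduction to the existence of totally real algebraic integers of degree $g$.)

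The false finiteness claim also hides the one genuinely delicate point, namely the uniformity of $n_g$ over the \emph{infinitely} many competitors: a maximum of infinitely many individual thresholds need not exist, and in fact it does not. For $g=2$, take $f_m(t)=t^2-2mt+m^2-2\in\mathcal{F}_2$ (roots $m\pm\sqrt2$); then $\mathfrak{f}_2(t)-f_m(t)=(2m-3)t+3-m^2$ is negative for all $t<(m^2-3)/(2m-3)\sim m/2$, so the pointwise thresholds grow without bound with $m$. So some additional input is required to control competitors with large roots uniformly --- this is what the paper's last sentence is doing when it observes that the set of $f\in\mathcal{F}_g$ with all roots at most $\max\{\Roots(\mathfrak{f}_g)\}$ is finite, and what the application actually uses is that the evaluation point $(\sqrt q+1)^2$ lies above all roots of every competing polynomial (for instance, an AM--GM estimate $\prod_i(t-\alpha_i)\le\bigl(t-\tfrac1g\sum_i\alpha_i\bigr)^g$ then handles all large-trace competitors at once). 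Your write-up contains no ingredient of this kind, because the incorrect finiteness of $\mathcal{F}_g$ made the problem look soft; as written, the argument does not prove the lemma.
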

\begin{proof}
The nonemptiness of $\mathcal{F}_g$ is equivalent to the existence of totally real algebraic integers of degree $g$, which follows from  \cite[p.~48 and \S 3]{lenstra1974simple}. We define a total order relation in $\mathcal{F}_g$ as follows: $f_1\leq f_2$ if and only if either $f_2-f_1$ has positive leading coefficient or $f_2$ and $f_1$ are the same polynomial. Let $\mathcal{F}_g^{min}$ be the subset of $\mathcal{F}_g$ of polynomials with minimal trace. Since the coefficients of all the polynomials in $\mathcal{F}_g^{min}$ are bounded, this set is finite. Consequently, we can take the maximal element $\mathfrak{f}_g$ of $\mathcal{F}_g^{min}$, which is clearly the maximal element of $\mathcal{F}_g$ as well. 
The subset of polynomials of $\mathcal{F}_g$ with roots $\leq \max \{\Roots(\mathfrak{f}_g)\}$ is also finite. This justify the existence of the $n_g$.
\end{proof}

We are now ready to prove our main result.
\begin{proof}[Proof of Theorem \ref{th:main}]
Let $n_g$ be given by Lemma \ref{lemma:key}. We write  $\mathfrak{f}_g(t)=\prod^g_{i=1}(t-r_i)$ where we can suppose $r_1<\dots < r_g$. We recall that
\begin{align}
h_g(t,X)\coloneqq\prod_{i=1}^g(t^2+(2X-r_i)t+X^2).
\end{align}

We set $c_g$ such that $r_g\leq 4\sqrt{c_g}$  and $n_g \leq (\sqrt{c_g}+1)^2 $. For every even power $q>c_g$, we claim that $h_g(t,\sqrt{q})$ is an irreducible ordinary $q$--Weil polynomial. First, note that the first condition on $c_g$ ensures that we have $r_i<4\sqrt{q}$ for every $i=1,\dots,g$, thus $h_g(t,\sqrt{q})$ is a $q$--Weil polynomial. 
Secondly, $h_g(t,\sqrt{q})$ is ordinary since its middle coefficient is congruent modulo $\sqrt{q}$ to \[\prod_{i=1}^g (-r_i)= \mathfrak{f}_g(0),\]
and by hypothesis $\mathfrak{f}_g(0)\not\equiv 0$ modulo $\sqrt{q}$.
Finally, it is irreducible because if not, it would contradict the irreducibility of $\mathfrak{f}_g$ (see Lemma \ref{lemma:irred}). Thus, $h_g(t,\sqrt{q})$ corresponds to an ordinary simple isogeny class by the Honda--Tate theory for ordinary varieties (see \cite[Th.~3.3]{Howe1995}). Note that any other isogeny class with endomorphism algebra which is a field and corresponding to the Weil polynomial $\prod_{i=1}^g(t^2+(2\sqrt{q}-s_i)t+q)$ for some real numbers $s_i$, will produce a polynomial $\prod_{i=1}^g(t-s_i)$ in $\mathcal{F}_g$. The cardinality of the group of rational points of the class with Weil polynomial $h_g(t,\sqrt{q})$ equals $h_g(1,\sqrt{q})=\mathfrak{f}_g((\sqrt{q}+1)^2)$.  The maximality is then a consequence of Lemma \ref{lemma:key}, since we have chosen $c_g$ such that $n_g \leq (\sqrt{c_g}+1)^2 $. The uniqueness of the class follows by construction. This class is then $\mathcal{I}_{\text{max}}^0(g,q)$. The statement about the totally real subfield of the endomorphism algebra of $\mathcal{I}_{\text{max}}^0(g,q)$ is clear from the construction. The last part of the statement follows from the ordinary Honda--Tate theory.
\end{proof}

Now we can prove Corollary \ref{cor:main}.
\begin{proof}[Proof of Corollary \ref{cor:main}]
Let $\ell$ be a prime dividing $h_g(1,\sqrt{q})$ and let $\mathfrak{P}$ be a prime of $\Q(r_1,\dots,r_g)$ lying over $\ell$. Hence $\mathfrak{P}$ divides $(q+2\sqrt{q}+1-r_{i_0})$ for some $i_0\in\{1,\dots,g\}$. Then
\[
\frac{\partial h_g}{\partial t} (1,\sqrt{q}) \equiv (2+2\sqrt{q}-r_{i_0}) \prod_{j\neq i_0} (q+2\sqrt{q}+1-r_j) \pmod{\mathfrak{P}}.
\]
In order for $\mathcal{I}_{\text{max}}^0(g,q)$ to not be $\ell$--cyclic, $\ell$ must divide $(\partial h_g/\partial t) (1,\sqrt{q})$ (see the \emph{cyclicity criterion} (\ref{thm:l-cyclic})). If this is the case, we  have the following two possible situations:

\begin{enumerate}
\item $\mathfrak{P}$ divides $(2+2\sqrt{q}-r_{i_0})$. Here, either $\mathfrak{P}|\sqrt{q}+1$, either $\mathfrak{P}|\sqrt{q}-1$. In the first case $\mathfrak{P}|r_{i_0}$, and in the second case $\mathfrak{P}|(4-r_{i_0})$;

\item $\mathfrak{P}$ divides $(q+2\sqrt{q}+1-r_{j_0})$ for some $j_0$. Here, we verify that $\mathfrak{P}|(r_{i_0}-r_{j_0})$.
\end{enumerate}

The first case corresponds to the term $\mathfrak{f}_g(0)\mathfrak{f}_g(4)$ of $N_g$, while the second one corresponds to $\Delta_g$. This concludes the proof.
\end{proof}

\begin{remark}
It is worth noting that Corollary \ref{cor:main} does not imply that $\mathcal{I}_{\text{max}}^0(g,q)$ is not $\ell$--cyclic for the primes $\ell$ dividing $N_g$. Indeed, it was proved for $g=2$ and we will prove it for $g=3$ (see Theorem \ref{ourfirsttheorem}) that $(\widehat{f_\mathcal{A}(1)},f_\mathcal{A}'(1))=1$, whereas $N_g>1$ (see the beginning of Section \ref{section:threefolds} and Remark \ref{Ng}).
\end{remark}

As pointed out in the Introduction, polynomials with minimal trace have been extensively studied in the literature. 
The set $\mathcal{F}_g^{min}$ is completely known for some values of $g$, thus we can easily deduce the corresponding $\mathfrak{f}_g$. Table \ref{table:f_g} summarizes the polynomials $\mathfrak{f}_g$ for $g\leq 7$ given in \cite{Smyth1984_AIF}. For $g=10$, from \cite{MckeeSmyth2004_ANTS} we get
$$
\begin{aligned}
\mathfrak{f}_{10}(t)=&t^{10}-18t^9+135t^8-549t^7+1320t^6-1920t^5+\\
&1662t^4-813t^3+206t^2-24t+1.
\end{aligned}
$$
Here we have that $\mathfrak{f}_{10}(4)=-191$ and $\Delta_{10}=983479472873=1567\times 627619319$.
Other authors give subsets of $\mathcal{F}_g^{min}$, so we cannot deduce $\mathfrak{f}_g$ from them.

\begin{table}[h!]
    \centering
\begin{tabular}{|c|c|c|c|}
\hline
$\mathfrak{f}_g(t)$ & $\mathfrak{f}_g(4)$ & $\mathfrak{f}_g(0)$ & $\Delta_g$ \\ \hline \hline 
$t-1$ & $3$ & -1 & $1$ \\ \hline
$t^2-3t+1$ & $5$ & 1 & $5$ \\ \hline
$t^3-5t^2+6t-1$ & $7$ & -1 & $7^2$ \\ \hline
$t^4-7t^3+14t^2-8t+1$ & $1$ & 1& $3^2\times 5^3$ \\ \hline
$t^5-9t^4+28t^3-35t^2+15t-1$ & $11$ & -1 & $11^4$ \\ \hline 
$t^6-11t^5+45t^4-84t^3+70t^2-21t+1$ & $13$ & 1 & $13^5$ \\ \hline
$t^7-13t^6+65t^5-157t^4+188t^3-102t^2+20t-1$ & $-17$ & -1 & $563\times 103651$ \\ 
\hline
\end{tabular}
\caption{Polynomials $\mathfrak{f}_g$ (\cite{Smyth1984_AIF}). }
    \label{table:f_g}
\end{table}

We are not able to find a $g$ and a $q$ for which there is no a unique maximal isogeny class within those with endomorphism algebra being a field. For the sets $\mathcal{F}_g$ that are known, $\mathfrak{f}_g(t)-f(t)$ is strictly positive for all $f\in\mathcal{F}_g$ and $t> 9$, which is what we need for the existence of $\mathcal{I}_{\text{max}}^0(g,q)$ for $q\geq 4$. Note that all the known $\mathfrak{f}_g$ have constant term equal to $\pm 1$. This is not true for all $f\in\mathcal{F}_g^{min}$ (see \cite[\S 3]{Aguirre2008} for a counterexample), but it still could be the case for $\mathfrak{f}_g$.

\subsection{Minimal isogeny classes}
Our results are completely analogous if we want to study the minimal (again, in the sense of the number of rational points) isogeny class within those with endomorphism algebra being a field. More precisely, the polynomial defining this class is
\begin{align}\label{hminus}
h^{-}_g(t,X)=\prod_{i=1}^g (t^2+(-2X-r_i)t+X^2),
\end{align}
where the $r_i$ are the roots of a polynomial $\mathfrak{f}_g^{-}$ chosen as follows. Consider the set $\mathcal{F}_g^{-}$ of irreducible monic polynomials of degree $g$ with integer coefficients and with all negative real roots. There exists a unique polynomial $\mathfrak{f}_g^{-}\in \mathcal{F}_g^{-}$ which is minimal: for any other $f\in\mathcal{F}_g^{-}$, the polynomial $f-\mathfrak{f}_g^{-}$ has a positive leading coefficient. Note that to every $f(t)=\prod_i(t-s_i)\in\mathcal{F}_g^{-}$ we can associate an isogeny class over $\Fq$ with Weil polynomial
\[
\prod_{i=1}^g (t^2+(-2\sqrt{q}-s_i)t+q),
\]
provided that the absolute values of the roots of $f(t)$ are $<4\sqrt{q}$. The number of rational points of the class equals $f((\sqrt{q}-1)^2)$.

In this context, we can give an example where the uniqueness fails when $q$ is not big enough. 
We check that $\mathfrak{f}_3^{-}(t)=t^3+5t^2+6t+1$ and $f(t)=t^3+6t^2+5t+1\in \mathcal{F}_3^{-}$ (see \cite{Smyth1984_AIF}). Taking $q=4$, we have $\mathfrak{f}_3^{-}(1)=f(1)=13$. This corresponds to the two minimal isogeny classes of threefolds \cite[\href{https://www.lmfdb.org/Variety/Abelian/Fq/3/4/ah_ba_acl}{Abelian variety isogeny class 3.4.ah\_ba\_acl}]{lmfdb_site} and \cite[\href{https://www.lmfdb.org/Variety/Abelian/Fq/3/4/ag_r_abj}{Abelian variety isogeny class 3.4.ag\_r\_abj}]{lmfdb_site} within those with endomorphism algebra being a field, from the LMFDB database.

\section{The case of threefolds}\label{section:threefolds}
In \cite[Th.~4.3]{GiangrecoMaidana} the second author proved that if $q$ is an even power of a prime, then the isogeny class $\mathcal{I}_{\text{max}}^0(2,q)$ is ordinary and cyclic, and has Weil polynomial
$t^4+at^3+bt^2+aqt+q^2$, with $a=4\sqrt{q}-3$ and $b=6q-6\sqrt{q}+1$. 
Using the notation introduced so far, this polynomial corresponds to the polynomial 
$
\mathfrak{f}_2(t)=t^2-3t+1
$ (see Table \ref{table:f_g}), and we have $N_2=5^2$. 
The only isogeny class of abelian surfaces with more rational points than $\mathcal{I}_{max}^0(2,q)$ and with commutative endomorphism algebra is $\mathcal{E}_{max-1}(q)\times \mathcal{E}_{max-2}(q)$, where $\mathcal{E}_{max-1}(q)$ and $\mathcal{E}_{max-2}(q)$ denote the isogeny classes of elliptic curves with Frobenius traces $2\sqrt{q}-1$ and $2\sqrt{q}-2$, respectively. Note that indeed $\mathcal{E}_{max-1}(q)$ equals $\mathcal{I}_{\text{max}}^0(1,q)$, but we decided to keep the notation which makes reference to the trace.
 
In this section we focus on isogeny classes of abelian varieties of dimension $3$ and obtain a similar characterisation for $\mathcal{I}_{\text{max}}^0(3,q)$.
The Weil polynomial of abelian threefolds has the following shape
\begin{equation}\label{weilpoly}
f(t)= t^6 + at^5 + bt^4 + ct^3 + qbt^2 + q^2at + q^3.
\end{equation}

Explicit description of which polynomials of the form (\ref{weilpoly}) appear as Weil polynomials of abelian threefolds can be found in \cite{Haloui}. We use this description to  completely characterize the class $\mathcal{I}_{\text{max}}^0(3,q)$ over any finite field $\Fq$ with $q$ an even power of a prime, as follows. 

\begin{theorem}\label{ourfirsttheorem}
Let $q$ be an even power of a prime. Then the maximal class $\mathcal{I}_{\text{max}}^0(3,q)$ exists, is ordinary, cyclic and has the following coefficients for its Weil polynomial:
\begin{align*}
a&=6\sqrt{q}-5;\\
b&= 15q - 20\sqrt{q}+6;\\
c&=20q\sqrt{q}-30q+12\sqrt{q}-1.
\end{align*}
Furthermore, Table \ref{more_rational_points} summarizes the only two isogeny classes of threefolds with more rational points than $\mathcal{I}_{\text{max}}^0(3,q)$, such that their endomorphism algebra is commutative. Those are $\mathcal{E}_{max-1}(q)\times \mathcal{I}_{\text{max}}^0(2,q)$ and $\mathcal{E}_{max-2}(q)\times \mathcal{I}_{\text{max}}^0(2,q)$, respectively. The first class is cyclic outside the $3$--primary component. The second one is cyclic.

\end{theorem}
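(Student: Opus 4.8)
The plan is to mirror the structure of the dimension $2$ argument from \cite{GiangrecoMaidana}, but now relying on Haloui's explicit description \cite{Haloui} of which polynomials of the form \eqref{weilpoly} arise as Weil polynomials of abelian threefolds. Since $q$ is an even power of a prime, $\sqrt{q}\in\Z$, so I can apply Theorem \ref{th:main} with $g=3$: from Table \ref{table:f_g} we have $\mathfrak{f}_3(t)=t^3-5t^2+6t-1$, hence $r_1,r_2,r_3$ are its roots, and expanding $h_3(t,\sqrt q)=\prod_{i=1}^3(t^2+(2\sqrt q-r_i)t+q)$ gives exactly the coefficients $a=6\sqrt q-5$, $b=15q-20\sqrt q+6$, $c=20q\sqrt q-30q+12\sqrt q-1$ claimed in the statement (using $r_1+r_2+r_3=5$, $r_1r_2+r_1r_3+r_2r_3=6$, $r_1r_2r_3=1$). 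The first task is therefore just to verify that this expansion is correct, and that $c_3$ from Theorem \ref{th:main} can be taken to be $0$ (or small enough that there is nothing to check) — here $r_g<6$ so $r_g\le 4\sqrt q$ holds for all $q\ge 4$, and the coprimality condition with $\mathfrak{f}_3(0)=-1$ is automatic, so the theorem applies to every even power $q$. Ordinarity is immediate from Theorem \ref{th:main} (the middle coefficient $c\equiv\mathfrak f_3(0)=-1\not\equiv 0\pmod{\sqrt q}$).

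Next I would establish the cyclicity. By Corollary \ref{cor:main}, $\mathcal{M}_3^0(q)$ is $\ell$-cyclic for all $\ell\nmid N_3=\mathfrak{f}_3(4)\mathfrak{f}_3(0)\Delta_3=7\cdot(-1)\cdot 49=-7^3$, so only $\ell=7$ remains. For $\ell=7$ I would invoke the sharper \emph{cyclicity criterion} \eqref{thm:l-cyclic} directly: it suffices to show $7\nmid(\widehat{f(1)},f'(1))$, where $f(t)=h_3(t,\sqrt q)$. Concretely, $f(1)=h_3(1,\sqrt q)=\mathfrak{f}_3((\sqrt q+1)^2)$, and one computes $f'(1)$ as $\sum_i (2+2\sqrt q-r_i)\prod_{j\ne i}(q+2\sqrt q+1-r_j)$ as in the proof of Corollary \ref{cor:main}. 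The point is that $7$ can divide $(f(1),f'(1))$ only through the three mechanisms identified there — $\mathfrak P\mid r_{i_0}$, $\mathfrak P\mid 4-r_{i_0}$, or $\mathfrak P\mid r_{i_0}-r_{j_0}$ — and one checks that when such a $\mathfrak P$ of residue characteristic $7$ exists, $7$ nonetheless divides $f(1)$ only to the first power, so $7\nmid\widehat{f(1)}$. This is a finite, explicit computation in $\Q(r_1,r_2,r_3)$, the cubic field of discriminant $49$ (in fact $\Q(\zeta_7)^+$), using that $\mathfrak f_3(4)=7$ and $\mathfrak f_3(0)=-1$ and $\Delta_3=7^2$; it is essentially a $7$-adic valuation count, and I expect it to go through cleanly, paralleling the $g=2$ case where one finds $(\widehat{f_\mathcal A(1)},f_\mathcal A'(1))=1$ despite $N_2=5^2>1$.

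The last part — identifying the only two isogeny classes with more rational points than $\mathcal M_3^0(q)$ among those with commutative endomorphism algebra — is where the real work lies, and it is the main obstacle. The classes with endomorphism algebra a field are handled by Theorem \ref{th:main}: any such class with more points would give a polynomial in $\mathcal F_3$ exceeding $\mathfrak f_3$ at $(\sqrt q+1)^2$, contradicting Lemma \ref{lemma:key} (using $(\sqrt q+1)^2\ge 9>n_3$ for $q\ge 4$, as noted in the paragraph after Table \ref{table:f_g}). So the surviving candidates are the \emph{decomposable} ones: threefolds isogenous to a product of an elliptic curve and an abelian surface, or of three elliptic curves, with commutative (equivalently, squarefree Weil polynomial) endomorphism algebra. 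Here I would argue that to beat $\mathcal M_3^0(q)$ one must take the surface factor to be a point-maximal one with commutative endomorphism algebra — which by the recalled $g=2$ result is $\mathcal M_2^0(q)$, with $N_2=5^2$ — and the elliptic factor to have near-maximal trace; a short optimization over the admissible Frobenius traces (using the Hasse bound $|t_E|\le 2\sqrt q$ and that distinct factors are needed for commutativity, ruling out $\mathcal E_{\max}\times\mathcal M_2^0(q)$ when $\mathcal M_2^0(q)$ already contains $\mathcal E_{\max}=\mathcal M_1^0(q)$ as a factor) leaves exactly $\mathcal E_{\max-1}(q)\times\mathcal M_2^0(q)$ and $\mathcal E_{\max-2}(q)\times\mathcal M_2^0(q)$, whose point counts $(\sqrt q+1)^2\cdot 2\sqrt q\cdot$ (resp.\ with traces $2\sqrt q-1$, $2\sqrt q-2$) times $f_{\mathcal M_2^0(q)}(1)$ I would tabulate in Table \ref{more_rational_points} and check to exceed $h_3(1,\sqrt q)$. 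Finally, the cyclicity of these two product classes follows by applying criterion \eqref{thm:l-cyclic} to the product of the factor Weil polynomials: for $\mathcal E_{\max-1}(q)\times\mathcal M_2^0(q)$ the only obstruction is at $\ell=3$ (coming from $N_1=3$, since $\mathcal E_{\max-1}(q)=\mathcal M_1^0(q)$ and, as remarked in the introduction, $\mathcal M_1^0(q)$ can fail to be $3$-cyclic), so it is "cyclic outside the $3$-component"; for $\mathcal E_{\max-2}(q)$ the trace is $2\sqrt q-2$, giving $f_E(1)=q-2\sqrt q+2=(\sqrt q-1)^2+1$, which one checks is coprime to the relevant data, so that product class is fully cyclic. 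The delicate points throughout are the bookkeeping of which decomposable classes are genuinely admissible as Weil polynomials of threefolds (this is exactly what \cite{Haloui} supplies) and ensuring the comparison of point counts is valid for \emph{all} even powers $q$, not just large ones — but since all the inequalities involved are between explicit polynomials in $\sqrt q$ of low degree, they can be verified uniformly for $q\ge 4$.
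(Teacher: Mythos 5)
The weak link is your first and third steps, which both lean on Theorem \ref{th:main} and Lemma \ref{lemma:key}. Theorem \ref{th:main} only applies for $q>c_3$, and $c_3$ is controlled by the constant $n_3$ of Lemma \ref{lemma:key}, whose proof is purely an existence argument: nothing in the paper \emph{proves} $n_3\le 9$. The sentence after Table \ref{table:f_g} that you cite is an unproved observation, and making it rigorous for $g=3$ would require showing $\mathfrak{f}_3(t)>f(t)$ at $t=(\sqrt q+1)^2$ for \emph{every} other totally positive cubic $f$ whose roots lie in $[0,4\sqrt q]$ -- an infinite family growing with $q$. That comparison is exactly the content that the paper's actual proof supplies by a different mechanism: it takes Haloui's explicit inequalities on the coefficients $(a,b,c)$ of degree-$6$ Weil polynomials and performs a finite exhaustive search in decreasing order of point count (Table \ref{more_rational_points}), which is effective and uniform in $q$; this is precisely what makes Theorem \ref{ourfirsttheorem} hold for \emph{all} even powers $q$ rather than for $q$ large, and it simultaneously produces the list of the only two reducible-but-squarefree competitors, i.e.\ the ``Furthermore'' part of the statement. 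Your plan never replaces this with an effective argument, so as written it does not prove the theorem for all even $q$, and your classification of the decomposable classes (``a short optimization over the admissible Frobenius traces'') is only a sketch: you would still have to check which coefficient triples are actually realized by threefolds (this is what \cite{Haloui} is used for and what the empty rows of the table encode), and your reduction ``the surface factor must be point-maximal'' is not justified a priori -- one must also rule out, by explicit comparison, products of three distinct elliptic curves and products of a near-maximal elliptic curve with a sub-maximal simple surface.

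Two further slips in that part: the class $\mathcal{E}_{max}(q)\times\mathcal{M}_2^0(q)$ is excluded because, for $\sqrt q\in\Z$, the trace-$2\sqrt q$ curve has Weil factor $(t-\sqrt q)^2$, so the product polynomial is not squarefree and the endomorphism algebra is not commutative -- not because ``$\mathcal{M}_2^0(q)$ already contains $\mathcal{E}_{max}$ as a factor'' ($\mathcal{M}_2^0(q)$ is simple, and $\mathcal{M}_1^0(q)=\mathcal{E}_{max-1}(q)$, not $\mathcal{E}_{max}(q)$); and with the paper's conventions $\mathcal{E}_{max-2}(q)$ has $q+2\sqrt q-1=(\sqrt q+1)^2-2$ rational points, not $q-2\sqrt q+2$. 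By contrast, your cyclicity argument at $\ell=7$ is sound and is essentially the paper's computation in disguise: the Bezout relation reduces everything to $\ell=7$ (equivalently your appeal to $N_3=-7^3$), and showing $7\,\|\,f(1)$ whenever $7\mid f(1)$ is exactly what the paper's tables modulo $7$ and $49$ verify (indeed $\mathfrak{f}_3(t)\equiv(t-4)^3\pmod 7$ and $\mathfrak{f}_3(u)\equiv 7\pmod{49}$ for $u\equiv 4\pmod 7$), so that piece of your plan goes through.
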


\begin{table}[h!]
    \centering
\begin{tabular}{|c|c|c|c|}
    \hline
    $\textbf{a}$ & $\textbf{b}$ & $\textbf{c}$ &  \textbf{Class}  \\ \hline\hline
    $6\sqrt{q} - 1$ & - &  - & -  \\ \hline
    $6\sqrt{q} - 2$ & $15q - 8\sqrt{q} + 1$ & -  & -  \\ \hline
    $6\sqrt{q} - 3$ & $15q - 12\sqrt{q} + 2$ & -  & -  \\ \hline
     & $15q - 12\sqrt{q} + 1$ & -  & -  \\ \hline
    $6\sqrt{q} - 4$ & $15q - 16\sqrt{q} + 5$ & -  & -  \\ \hline
     & $15q - 16\sqrt{q} + 4$ & $20q\sqrt{q}-24q+8\sqrt{q}-1$ &  $\mathcal{E}_{max-1}(q)\times \mathcal{I}_{\text{max}}^0(2,q)$  \\ \hline
     & $15q - 16\sqrt{q} + 3$ & -  & -  \\ \hline
     & $15q - 16\sqrt{q} + 2$ & -  & -  \\ \hline
     & $15q - 16\sqrt{q} + 1$ & -  & -  \\ \hline
    $6\sqrt{q} - 5$ & $15q - 20\sqrt{q} + 8$ & -  & -  \\ \hline
     & $15q - 20\sqrt{q} + 7$ & $20q\sqrt{q}-30q + 14\sqrt{q} -2$  & $\mathcal{E}_{max-2}(q)\times \mathcal{I}_{\text{max}}^0(2,q)$  \\ \hline
     & $15q - 20\sqrt{q} + 6$ & $20q\sqrt{q} - 30q + 12\sqrt{q} - 1$  & $\mathcal{I}_{\text{max}}^0(3,q)$  \\ \hline
\end{tabular}
\caption{Isogeny classes of threefolds with more rational points than $\mathcal{I}_{\text{max}}^0(3,q)$, with commutative endomorphism algebra. The symbol - indicates when there is no Weil polynomial corresponding to an isogeny class whose endomorphism algebra is commutative.}
    \label{more_rational_points}
\end{table}

\begin{proof}
The proof of the first part can mainly be divided in two steps. First, we identify the unique Weil polynomial corresponding to the maximal class, for a fixed $q$. It is the one with maximal coefficients and that is irreducible. This is done by exhaustive research (see Table \ref{more_rational_points}) and using the bounds provided in \cite[Th.~1.1]{Haloui}. We consider the strict inequalities since equality is equivalent to have multiple roots (see \cite[Lem.~2.1.3]{DIPIPPO1998426}). This gives the polynomial in the statement of the theorem. It turns out that its coefficients can be written as polynomials in $\sqrt{q}$ (even if the bounds in \cite[Th.~1.1]{Haloui} are not polynomials). Considering these coefficients as polynomials in $X=\sqrt{q}$, we obtain an irreducible bivariate polynomial, that we denote by $h(t,X)$. By Lemma \ref{lemma:irred}, $h(t,\sqrt{q})$ is irreducible for every $\sqrt{q}$. Thus, it gives the desired polynomial for every even power $q$ of a prime, hence the existence of $\mathcal{I}_{\text{max}}^0(3,q)$. From the form of the middle coefficient $c$, we deduce that $\mathcal{I}_{\text{max}}^0(3,q)$ is ordinary.

Secondly, we consider $f(1)=h(1,X)$ and $f'(1)=\frac{\partial h}{\partial t}(1,X)$ as polynomials in $\sqrt{q}=X$,
$$f(1)=X^6 + 6X^5 + 10X^4 - 9X^2 - 2X + 1,$$
$$f'(1)=6X^5 + 25X^4 + 20X^3 - 18X^2 - 14X + 2,$$
and we write a Bézout relation between them:
$$p(X)f(1) +
q(X)f'(1) =
7,$$
with $p(X)=-12X^4 - 44X^3 - 27X^2 + 24X + 9$ and $q(X)=2X^5 + 11X^4 + 16X^3 - X^2 - 10X - 1$.
Then, considering the reduction modulo $7$ of $f'(1)$ and modulo $7$ and $49$ of $f(1)$, and applying the \emph{cyclicity criterion} (\ref{thm:l-cyclic}), we get that the class $\mathcal{I}_{\text{max}}^0(3,q)$ is cyclic.

For the second part of the theorem, note that there are only two polynomials without multiple roots, but reducible, that appear in the exhaustive research before the one corresponding to the class $\mathcal{I}_{\text{max}}^0(3,q)$ (see Table \ref{more_rational_points}). They correspond to the isogeny classes $\mathcal{E}_{max-1}(q)\times \mathcal{I}_{\text{max}}^0(2,q)$ and $\mathcal{E}_{max-2}(q)\times \mathcal{I}_{\text{max}}^0(2,q)$. In the first case, it is easy to check that $\mathcal{E}_{max-1}(q)$ and $\mathcal{I}_{\text{max}}^0(2,q)$ have coprime cardinality of their groups of rational points. Moreover, $\mathcal{I}_{\text{max}}^0(2,q)$ is cyclic and $\mathcal{E}_{max-1}(q)$ is eventually not cyclic only at the $3$--primary component, depending on $q$. In the second case, $\mathcal{E}_{max-2}(q)$ and $\mathcal{I}_{\text{max}}^0(2,q)$ are both cyclic. One can easily check applying the \emph{cyclicity criterion} (\ref{thm:l-cyclic}) that the corresponding isogeny class $\mathcal{E}_{max-2}(q) \times \mathcal{I}^0_{\text{max}}(2,q)$ of threefolds is cyclic too.
\end{proof}

\begin{remark}
Theorem \ref{ourfirsttheorem} fails when $q$ is not an even power of a prime. Indeed, for $q=599$ consider the $q$--Weil polynomial $$f(t)= t^6 + 142t^5 + 8516t^4 + 276053t^3 + 8516qt^2 + 142q^2t + q^3.$$ It corresponds to the maximal isogeny class of threefolds with endomorphism algebra a field. However, the class is ordinary but not cyclic, since $13\mid f'(1)$ and $13^2\mid f(1)$.
\end{remark}

\begin{remark}\label{Ng}
Using the notation introduced in the previous sections, the polynomial given in Theorem \ref{ourfirsttheorem} corresponds to having
\[\begin{cases}
r_1+r_2+r_3=-5\\
r_1r_2+r_1r_3+r_2r_3=6\\
r_1r_2r_3=-1
\end{cases}\]
and thus to the polynomial 
$$
\mathfrak{f}_3(t)=t^3-5t^2+6t-1.
$$
Then, using Corollary \ref{cor:main}, we easily obtain $N_3=-7^3$ and conclude that the isogeny class is cyclic outside the $7$--primary component. Indeed, the class is everywhere cyclic, as showed in Theorem \ref{ourfirsttheorem}.
\end{remark}
\begin{remark}\label{remark_LMFDB}
In \cite[\S 4.7]{LMFDB} the authors compute the Weil polynomials of the $\Fq$--maximal--simple isogeny class of abelian threefolds, for $q$ up to $25$. In particular, they point out that in their examples the simple--maximal class is always ordinary. However, they do not know how to explain this fact and are limited by the calculation power. We remark that the isogeny class with Weil polynomial as in Theorem \ref{ourfirsttheorem} is the simple--maximal one and it is proved to be ordinary.
\end{remark}

\noindent
\textbf{Acknowledgements.} Part of this work was completed while the first author was receiving funding from the French “Agence de l'innovation de défense” and from the French “Fondation Mathématiques Jacques Hadamard”.

\bibliography{BG-maximal_ab_var}
\bibliographystyle{siam}

\end{document}